\newcommand{\E}{{\mathbb E}}
\newcommand{\R}{{\mathbb R}}
\newcommand{\CC}{{\mathcal{C}}}
\newcommand{\FF}{{\mathcal{F}}}
\newcommand{\bsx}{\boldsymbol x}
\newcommand{\bsy}{\boldsymbol y}
\newcommand{\bsU}{\boldsymbol U}
\newcommand{\bsX}{\boldsymbol X}
\newcommand{\bsY}{\boldsymbol Y}
\newcommand{\bsZ}{\boldsymbol Z}
\newcommand{\bsone}{\boldsymbol 1}
\newcommand{\bszero}{\boldsymbol 0}
\newcommand{\bsmu}{\boldsymbol \mu}
\newtheorem{theorem}{Theorem}
\newtheorem{corollary}[theorem]{Corollary}
\newtheorem{lemma}[theorem]{Lemma}
\newtheorem{proposition}[theorem]{Proposition}
\theoremstyle{definition}
\newtheorem{example}[theorem]{Example}
\newtheorem{definition}[theorem]{Definition}
\newtheorem{remark}[theorem]{Remark}
\begin{document}

\title{Dependence uncertainty bounds for the energy score and the multivariate Gini mean difference}
\author{ Carole Bernard \\
\texttt{carole.bernard@grenoble-em.com}\\
Department of Accounting, Law and Finance\\
Grenoble Ecole de Management,\\
Grenoble, France. 
\and 
Alfred M\"{u}ller\\
\texttt{mueller@mathematik.uni-siegen.de}\\
Department of Mathematics,\\
University of Siegen,\\
Siegen, Germany. }
\date{\today }
\maketitle

\begin{abstract} The energy distance and energy scores became important tools in multivariate statistics and multivariate probabilistic forecasting in recent years. They are both based on the expected distance
of two independent samples. In this paper we study dependence uncertainty bounds for these quantities under the assumption that we know the marginals but do not know the dependence structure. We find some interesting sharp analytic bounds, where one of them is obtained for an unusual spherically symmetric copula. These results should help to better understand the sensitivity of these measures to misspecifications in the copula.
\end{abstract}
\textbf{Keywords}: dependence uncertainty bounds,  energy score, Gini mean difference, spherically symmetric copula.

\section{Introduction}

In recent years the so-called {\it energy distance} became a famous tool in multivariate statistics used e.g., for goodness-of-fit tests and many other things. For a good overview over this topic we refer to \cite{szekely2017}. 
Similar concepts have been suggested in the theory of multivariate probabilistic forecasting, where the so-called {\it energy score} has been suggested as a strictly proper scoring rule for multivariate distributions
in the fundamental paper of \cite{gneiting2007}. Both concepts rely on functionals that are based on expected distances of independent copies of random vectors. This is related to the multivariate Gini mean difference, which has been studied in detail in \cite{koshevoy1997}.  In the univariate case the Gini mean difference is a well-known measure of
spread of distributions or inequality in case of income distributions, see e.g., \cite{yitzhaki2003} for an overview. 

In goodness-of-fit testing as well as in probabilistic forecasting one is interested in detecting misspecifications of stochastic models. Therefore it is an important question how sensitive the used functionals react to which kind of misspecification. \cite{pinson2013} studied the discrimination ability of the energy score for the case of multivariate normal distributions. Based on simulation studies they conclude that the discrimination ability of the energy score may be limited when focusing on the dependence structure of multivariate probabilistic forecasts, but to the best of our knowledge there has been no general study of this problem so far for general distributions. 

In this paper we want to study this problem of so-called {\it dependence uncertainty bounds} for such quantities like the energy score and the Gini mean difference. By dependence uncertainty bounds we mean here bounds for a functional of a multivariate distribution under the assumption that we only know the marginal distributions but do not know the dependence structure, i.e., we do not know the copula. The study of such uncertainty bounds has a long history going back to \cite{hoffding1940} and \cite{frechet1951}. They considered this problem for correlation coefficients and for the value of cumulative distribution functions. In the meanwhile there is a vast literature on this topic for many kinds of functionals. For an overview see \cite{puccetti2015}. Very often the extremal positive dependence is given by the comonotone copula, in particular if the functional is an expectation of a supermodular function, as has been shown in \cite{tchen1980} and \cite{ruschendorf1983}. It is typically more complicated to find the extremal negative dependence, even in the case of expectations of functions and thus linear functionals of the distributions, which is the case for most problems considered in the literature. An example of a non-linear problem is the case of finding the solution of an optimal stopping problem that was considered in \cite{muller2001}. In such a case of a non-linear problem the characterization of the extremal dependence can be very different from the case of a linear problem.

In this paper we also deal with a non-linear problem, but it will turn out that still the comonotone copula will typically lead to the extremal positive dependence.  But for the extremal negative dependence we find in some cases a very interesting solution based on a spherical symmetric copula. This is an interesting copula, which does not seem to be well-known in the dependence modelling community.

The paper is organized as follows. In Section \ref{S1}, we recall the definitions of the various concepts. We also introduce some important notation that will be used throughout the manuscript and present the problem that is considered in this paper. In Section \ref{S2}, we focus on the expected distance between two multivariate distributions and its sensitivity to dependence uncertainty. Finally in Section \ref{S3}, we provide a number of results on the dependence uncertainty bounds on the energy score. Section \ref{S4} concludes with a number of open questions that are left for future research.

\section{Energy score and Gini mean difference\label{S1}}

Let $\bsX, \tilde{\bsX}$ be independent copies of a $d$-dimensional random vector with cumulative distribution function (cdf)
$$
F(\bsx) = P(X_1 \le x_1,\ldots, X_d \le x_d), \quad \bsx = (x_1,\ldots,x_d) \in \R^d,
$$ 
and $\bsY, \tilde{\bsY}$ be independent copies of a random vector with cdf $G$.
We define the expected distance between two independent $d$-dimensional samples of $F$ and $G$ as
$$
S(F,G) = \E\left( \| \bsX - \bsY \|_2 \right) = \int \|\bsx-\bsy \|_2 F(d\bsx) G(d\bsy),
$$
where we identify the cdfs $F$ and $G$ with the corresponding probability measures and denote as usual by 
$$
\|x \|_2 = \sqrt{\sum_{i=1}^d x_i^2}
$$
the Euclidian distance. For an observation $\bsy,$ we similarly define by identifying $\bsy$ with the one-point measure in $\bsy$
$$
S(F,\bsy) = \E\left( \| \bsX -\bsy \|_2 \right) = \int \|\bsx-\bsy \|_2 F(d\bsx).
$$

The \textit{energy distance} between two distributions $F$ and $G$ is defined as
$$
\mathcal{E}(F,G) = 2 S(F,G) - S(F,F) - S(G,G).
$$
This is a distance between probability distributions, as it can be shown that $\mathcal{E}(F,G) \ge 0$ for all $F,G$ and that $\mathcal{E}(F,G) = 0$
if and only if $F=G$. For details of this concept and applications we refer to the overview article of \cite{szekely2017}. 
A strongly related concept is the so-called \textit{energy score} for a distributional forecast $F$ and an observation $\bsy,$ which is given by 
$$
ES(F,\bsy) = S(F,\bsy) - \frac{1}{2} S(F,F).
$$
This can be generalized by introducing a parameter $\beta\in(0,2)$ as already considered in the fundamental paper of \cite{gneiting2007}.

\begin{definition} For $\beta \in (0,2)$, the generalized expected distance between two independent $d$-dimensional samples of $F$ and $G$ is defined as
$$
S_\beta(F,G) = \E\left( \| \bsX - \bsY \|^\beta_2 \right) = \int \|\bsx-\bsy \|^\beta_2 F(d\bsx) G(d\bsy),
$$ and the \textit{generalized energy score} as
$$
ES_\beta(F,\bsy) = S_\beta(F,\bsy) - \frac{1}{2} S_\beta(F,F) 
$$
Similarly, the corresponding \textit{generalized energy distance} is defined as
\begin{equation}\label{GED}
\mathcal{E}_\beta(F,G) = 2 S_\beta(F,G) - S_\beta(F,F) - S_\beta(G,G).
\end{equation}
\end{definition}
Note that the limiting case $\beta=2$ is excluded in the definition, as $\mathcal{E}_2(F,G)$ only depends on the marginal distributions of $F$ and $G$ and thus does not depend at all
on the copula and in fact therefore is not a distance and does not lead to a proper scoring rule.

\begin{remark}

The function 
$$
S(F,F) =  \E\left( \| \bsX - \tilde{\bsX} \|_2 \right) = \int \|\bsx-\bsy \|_2 F(d\bsx) F(d\bsy)
$$
is known (sometimes up to a constant $2d$) as \textit{multivariate Gini mean difference} and has been studied in detail in \cite{koshevoy1997}. 
To distinguish the univariate version of the Gini mean difference from the multivariate one we denote, from now on, the univariate version with a slight abuse of notation as
$$
M(F) :=  M(F,F) = S(F,F) =  \int |x-y| F(dx) F(dy) = 2 \int (1-F(x)) F(x)  dx
$$ 
and its generalization for $\beta \in (0,2)$ and different $F$ and $G$ similarly as
$$
M_\beta(F,G) = \int |x-y|^\beta F(dx) G(dy).
$$ 
Note that the bordering case of $\beta=2$ yields for $X \sim F$ up to a factor of two the variance
$$
M_2(F,F) =  \int |x-y|^2 F(dx) F(dy) = 2 var(X).
$$ 
\end{remark}

We will frequently consider the random variable $Z = |X-Y|$ for independent random variables $X$ and $Y$. We use the following notation.

\begin{definition}\label{defdia}
For independent $X$ and $Y$ with cdf $F$ and $G$, respectively, we denote by $F \Diamond G$ the cdf of $Z = |X-Y|$ which is given by
$$
F \Diamond G(x) = \int (F(y+x) - F(y-x)) \ G(dy), \quad x \ge 0.
$$
\end{definition}
Notice that the Gini mean difference $M(F) = S(F,F)$ is the mean of $F \Diamond F$ and more general $M_\beta(F,G) = \E(Z^\beta)$ is the corresponding
moment of order $\beta$. 

\begin{example} \label{ex-uniform}
In case of standard uniform distributions $U,V$ on $(0,1)$ for $F=G$ we get for $Z$ 
the density $f_{Z}(z)=  2-2z $ on $[0,1]$ and thus 
\begin{equation}\label{Ex2Mbeta}
M_\beta(F) = \E(Z^\beta) = \int_0^1 z^\beta (2-2z) dz = \frac{2}{\beta+1} - \frac{2}{\beta+2}
\end{equation}
with the special cases $S(F,F) = M(F) = \E Z=\frac{1}{3}$ and in the limiting case  when $\beta = 2$ we get $\E Z^2=\frac{1}{6} = 2 var(U)$. 
\end{example}

\subsection{Dependence Uncertainty Bounds}

We want to study how sensitive these quantities are with respect to the dependence information in the joint distribution. 
Therefore we investigate bounds for such expressions given that we only know the marginals of $F$ and $G$. As usual we denote the marginals by
$$
F_i(x) := P(X_i \le x), \quad x \in \R, \ i = 1,\ldots, d.
$$
By Sklar's theorem we can write the joint cumulative distribution $F$ of $\bsX$ in the form
$$
F(\bsx) = C(F_1(x_1), \ldots, F_d(x_d)), \quad \bsx \in \R^d,
$$
for come copula $C$, see e.g., \cite{nelsen2007}. We denote by $\CC$ the set of all possible copulas and by $\FF = \FF(F_1,\ldots,F_d)$ the so-called Fr\'echet class of all
multivariate distributions with given marginals $F_1,\ldots,F_d$. The well-known Fr\'echet bounds are denoted by 
$$
F^+(\bsx) = \min\{F_1(x_1), \ldots, F_d(x_d)\}
$$
and 
$$
F^-(\bsx) = \max\{F_1(x_1)+ \ldots+ F_d(x_d)-d+1, 0\}
$$
and $C^+$ and $C^-$ will be the corresponding copulas, typically called the comonotonic and countermonotonic copula, where one has to take into account that $C^-$ is only a copula for $d=2$. 

 Similarly, for a joint cumulative distribution function $G$ of $\mathbf{X}$,
$$
G(\bsx) = C(G_1(x_1), \ldots, G_d(x_d)), \quad \bsx \in \R^d,
$$
we denote by $\mathcal{G}$, the Fr\'echet class of all
multivariate distributions with given marginals $G_1,\ldots,G_d$.

We first study the corresponding dependence uncertainty bounds on the generalized expected distance between two independent $d$-dimensional samples from $F$ and $G$. These bounds can then be written as 
\begin{equation}
\inf_{F \in \FF} S_\beta(F,F)\ \hbox{and}\  \sup_{F \in \FF} S_\beta(F,F),\label{supSbeta}
\end{equation}
when both samples come from the same distribution $F$, and by 
\begin{equation}
\inf_{F \in \FF, G\in \mathcal{G}} S_\beta(F,G)\ \hbox{and}\  \sup_{F \in \FF, G\in \mathcal{G}} S_\beta(F,G),\label{supSbeta2}
\end{equation}
when the multivariate distributions $F$ and $G$ are not identical.

For a given observation $\bsy$, we then study dependence uncertainty bounds for its generalized energy score by considering
\begin{equation}\label{infESbeta}
\inf_{F \in \FF} ES_\beta(F,\bsy)\ \hbox{and}\  \sup_{F \in \FF} ES_\beta(F,\bsy)
\end{equation}
 The optimizations in \eqref{supSbeta}, \eqref{supSbeta2} and \eqref{infESbeta} are over the Fr\'echet class $\FF$ and $\mathcal{G}$. In fact, given that the marginal distributions are given,  the uncertainty bounds can also been considered as solutions of optimization problems over the class $\CC$ of all copulas.

\section{Dependence uncertainty bounds for $S_\beta$ \label{S2}}

In this section, we provide  analytic bounds for the expressions \eqref{supSbeta} and \eqref{supSbeta2}. To do so, we first look at some fundamental properties of $S_\beta$ and $ES_{\beta}$.

It is well-known (see for instance \cite{gneiting2007}) that the generalized energy distance $\mathcal{E}_\beta$ is a distance for  $\beta \in (0,2)$, i.e., $\mathcal{E}_\beta(F,G) \ge 0$ for all $F,G$ and therefore also $ES_\beta(F,\bsy) \ge 0$ for all $F$ and $\bsy$. Moreover, this implies by definition \eqref{GED}  also that
\begin{equation} \label{es-sm}
2 S_\beta(F,G) =  S_\beta(F,G) + S_\beta(G,F) \ge  S_\beta(F,F) + S_\beta(G,G).
\end{equation}
It is also well-known that
$ES_\beta$ is a proper scoring rule, meaning that 
\begin{equation} \label{es-proper}
ES_\beta(F,F) \le ES_\beta(F,G) \quad \mbox{ for all } F,G.
\end{equation}
Note also that $ES_\beta(F,F) = \frac{1}{2} S_\beta(F,F)$. We can derive the following lemma on the concavity of $S_\beta(F,F)$.
\begin{lemma}\label{LEM1}
$F \to S_\beta(F,F)$ is concave. 
\end{lemma}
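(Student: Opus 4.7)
The plan is to verify concavity directly by expanding $S_\beta(F,F)$ for a convex combination $F = \lambda F_1 + (1-\lambda)F_2$ with $\lambda \in [0,1]$ and then appealing to the inequality \eqref{es-sm}, which is already recorded in the excerpt as a consequence of the energy distance being nonnegative.

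First I would use the bilinearity of the map $(F,G) \mapsto S_\beta(F,G) = \int \|\bsx-\bsy\|_2^\beta\, F(d\bsx)\,G(d\bsy)$ in each argument. Writing $F = \lambda F_1 + (1-\lambda) F_2$ and expanding $F(d\bsx)F(d\bsy)$ yields
\begin{equation*}
S_\beta(F,F) = \lambda^2 S_\beta(F_1,F_1) + 2\lambda(1-\lambda) S_\beta(F_1,F_2) + (1-\lambda)^2 S_\beta(F_2,F_2).
\end{equation*}

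Next I would subtract the linear interpolant $\lambda S_\beta(F_1,F_1) + (1-\lambda) S_\beta(F_2,F_2)$. Using $\lambda^2 - \lambda = -\lambda(1-\lambda)$ and $(1-\lambda)^2 - (1-\lambda) = -\lambda(1-\lambda)$, the difference factors neatly as
\begin{equation*}
S_\beta(F,F) - \lambda S_\beta(F_1,F_1) - (1-\lambda) S_\beta(F_2,F_2) = \lambda(1-\lambda)\bigl[2 S_\beta(F_1,F_2) - S_\beta(F_1,F_1) - S_\beta(F_2,F_2)\bigr].
\end{equation*}
The bracketed expression is precisely $\mathcal{E}_\beta(F_1,F_2)$, which is nonnegative for $\beta \in (0,2)$ since $\mathcal{E}_\beta$ is a distance. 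Equivalently, one can invoke inequality \eqref{es-sm} directly. Hence the right-hand side is nonnegative, establishing concavity.

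There is no real obstacle here: the result is an essentially immediate consequence of bilinearity of $S_\beta$ combined with the already-stated nonnegativity of the energy distance $\mathcal{E}_\beta$. The only thing to be slightly careful about is ensuring that the bilinear expansion is justified under integrability; but since we have assumed throughout the paper that all relevant $\beta$-moments exist (so that $S_\beta$ is finite), Fubini applies without issue.
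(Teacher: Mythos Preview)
Your proof is correct and follows essentially the same approach as the paper: both expand $S_\beta$ at a convex combination using bilinearity and then apply inequality \eqref{es-sm} (equivalently, nonnegativity of $\mathcal{E}_\beta$) to conclude. The only cosmetic difference is that you subtract the linear interpolant and identify the remainder as $\lambda(1-\lambda)\mathcal{E}_\beta(F_1,F_2)$, whereas the paper bounds the cross term directly and simplifies; the underlying argument is identical.
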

\begin{proof}
Indeed, $S_\beta$ is linear in $F$ and $G$ and therefore we get for $\alpha \in (0,1)$ from \eqref{es-sm} that
\begin{align*}
\lefteqn{S_\beta(\alpha F + (1-\alpha)G, \alpha F + (1-\alpha)G) } \\
& \qquad = \alpha^2 S_\beta(F,F) + \alpha (1-\alpha) (S_\beta(F,G) +  S_\beta(G,F)) + (1-\alpha)^2 S_\beta(G,G)\\
& \qquad \ge \alpha^2 S_\beta(F,F) + \alpha (1-\alpha) ( S_\beta(F,F) +  S_\beta(G,G)) + (1-\alpha)^2 S_\beta(G,G)\\
& \qquad  = \alpha S_\beta(F,F) + (1-\alpha) S_\beta(G,G).
\end{align*}\end{proof}

\subsection{Lower bound on  $S_\beta(F,G)$}
For  finding a minimum value of $S_\beta(F,G)$ the following representation is going to be helpful.
\begin{align} \label{z-representation}
S_\beta( F,G) & =   \E\left( \left\| \bsX - \bsY \right\|_2^\beta \right) =  \ \E \left(\left( \sum_{i=1}^d Z_i^2  \right)^{\frac{\beta}{2}}\right),
 \end{align}
where $Z_i = |X_i-Y_i| \sim F_i \Diamond G_i$ (see Definition \ref{defdia}). 
Thus we have a representation $S_{\beta}(F,G) = \E f(Z_1^2,\ldots,Z_d^2)$ where we 
know the marginals of $(Z_1^2,\ldots,Z_d^2)$ and the function $f$ has the following properties: as a concave function of the sum it is submodular, 
i.e., $-f$ is supermodular. For the definition and properties of supermodular functions and their relevance for inequalities of expectations in case
of distributions with given marginals we refer to Chapter 3 in M\"uller and Stoyan (2002).

From this we can derive the following lower bound.

\begin{theorem} \label{lower-bound-FG}
For any random vector $\bsX$ and $\bsY$ with cumulative cdfs $F$ and $G$  we get the following lower bound:
$$
S_\beta(F,G) \ge \E\left(Z^{\frac{\beta}{2}}\right)
$$
for a random variable $Z$ that is defined as
$$
Z = \sum_{i=1}^d \left((F_i \Diamond G_i)^{-1}(U)\right)^2
$$
for some standard uniform random variable $U$.

In case of identical marginals $F_1 = \ldots = F_d$ and $G_1 = \ldots  =G_d$ this bound is sharp and is obtained for the upper Fr\'echet bounds $F = F^+$ and $G = G^+$. Furthermore, in this case it 
reduces to
$$
S_\beta(F,G) \ge S_\beta\left(F^+,G^+\right) = d^{\frac{\beta}{2}} M_\beta(F_1,G_1).
$$
\end{theorem}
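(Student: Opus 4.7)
The plan is to build on the representation \eqref{z-representation} and exploit the submodularity hint given right before the theorem. The random variables $Z_i := |X_i - Y_i|$ always have marginal cdf $F_i \Diamond G_i$, no matter what joint distribution we impose on $(\bsX, \bsY)$ (subject to the given marginals $F_i, G_i$ and independence of $\bsX$ and $\bsY$). So the problem becomes one of minimising
$$
\E\bigl[ f(Z_1^2, \ldots, Z_d^2) \bigr], \qquad f(w_1, \ldots, w_d) = (w_1 + \cdots + w_d)^{\beta/2},
$$
over all couplings of $(Z_1^2, \ldots, Z_d^2)$ with prescribed marginals. Enlarging the feasible set to all such couplings (even those not arising from a true $(\bsX, \bsY)$) can only lower the infimum, which is exactly what we want for a lower bound.

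Next, I would verify that $f$ is submodular on $\R_+^d$. For $w, w' \in \R_+^d$ set $a = \sum_i (w_i \vee w'_i)$, $b = \sum_i (w_i \wedge w'_i)$, $s = \sum_i w_i$, $t = \sum_i w'_i$. Then $a + b = s + t$ while $\{a, b\}$ majorises $\{s, t\}$, so by concavity of $u \mapsto u^{\beta/2}$ (recall $\beta \in (0,2)$) we get $a^{\beta/2} + b^{\beta/2} \le s^{\beta/2} + t^{\beta/2}$, i.e.\ $f(w \vee w') + f(w \wedge w') \le f(w) + f(w')$. Hence $-f$ is supermodular. The classical result of \cite{tchen1980} and \cite{ruschendorf1983} then says that, among all couplings with given marginals, the expectation of a supermodular function is maximised by the comonotonic coupling; equivalently $\E[f(W_1, \ldots, W_d)]$ is minimised there. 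Since $w \mapsto w^2$ is monotone on $[0, \infty)$, the comonotonic coupling of $(Z_1^2, \ldots, Z_d^2)$ is obtained by taking $Z_i = (F_i \Diamond G_i)^{-1}(U)$ with a single uniform $U$, producing exactly the random variable $Z$ of the statement and hence the bound $S_\beta(F,G) \ge \E(Z^{\beta/2})$.

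For sharpness under identical marginals $F_1 = \cdots = F_d$ and $G_1 = \cdots = G_d$, I would take $F = F^+$ and $G = G^+$, so that $X_1 = \cdots = X_d$ and $Y_1 = \cdots = Y_d$ almost surely. Then $Z_1 = \cdots = Z_d$ are trivially comonotonic with common distribution $F_1 \Diamond G_1$, so $\sum_i Z_i^2 = d Z_1^2$ and
$$
S_\beta(F^+, G^+) = \E\bigl[(d Z_1^2)^{\beta/2}\bigr] = d^{\beta/2}\, \E\bigl[|X_1 - Y_1|^\beta\bigr] = d^{\beta/2}\, M_\beta(F_1, G_1),
$$
matching the general lower bound and showing it is attained.

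The main obstacle is the usual feasibility subtlety in these Fréchet-type arguments: the comonotonic coupling of $(Z_1^2, \ldots, Z_d^2)$ need not correspond to any admissible joint distribution of $(\bsX, \bsY)$ in general, which is why the stated lower bound need not be sharp without the identical-marginals hypothesis. The proof nevertheless goes through because we only ever enlarge the feasible set when passing to the Tchen/Rüschendorf minimisation, and in the identical-marginals case the comonotonic $\bsX$ and $\bsY$ happen to realise it.
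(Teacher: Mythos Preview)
Your proposal is correct and follows essentially the same approach as the paper: both pass to the representation $S_\beta(F,G)=\E f(Z_1^2,\ldots,Z_d^2)$, invoke Tchen's supermodular inequality to conclude that the comonotone coupling of the $Z_i$ gives the lower bound, and verify attainment under identical marginals via $F=F^+$, $G=G^+$. Your write-up is slightly more explicit than the paper's in two respects---you spell out the submodularity verification via a majorisation argument (the paper simply asserts that a concave function of a sum is submodular), and you make the feasibility relaxation explicit (the paper leaves it implicit)---but the underlying argument is the same.
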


\begin{proof}
According to \eqref{z-representation} we can write
$S(F,G) = \E f(Z_1^2,\ldots,Z_d^2)$ for a submodular function $f$. It follows from \cite{tchen1980} that a lower bound for $\E f(Z_1^2,\ldots,Z_d^2)$ is obtained by assuming that the copula
of $(Z_1^2,\ldots,Z_d^2)$ is given by the upper Fr\'echet bound $C^+$, or equivalently the copula of $(Z_1,\ldots,Z_d)$. This means that we can assume that $Z_i^2 = ((F_i \Diamond G_i)^{-1}(U))^2$ for some fixed 
uniform $U$ and from this the first assertion immediately follows.

If all marginals of $F$ and $G$ are the same, then we have for $F^+$ and $G^+$ that $\bsX = (X_1,\ldots,X_1)$ and $\bsY = (Y_1,\ldots,Y_1)$ and thus we get in 
\eqref{z-representation} also the equality $Z_1 = \ldots = Z_d$ and therefore this lower bound is attained and reduces to 
$$
S_\beta\left(F^+,G^+\right) = \E\left((dZ_1^2)^{\frac{\beta}{2}}\right) = d^{\frac{\beta}{2}} M_\beta(F_1,G_1)
$$
\end{proof}
Note that $S_\beta(F^+,G^+)$ may not be a lower bound when the marginals of $F$ and $G$ are not identical. Consider $X = (4U_1, U_1)$ and  $Y = (U_2, 4U_2)$ for two independent uniform $U_1, U_2$. Then $Z_1$ is large if $U_1$ is large and $Z_2$ is large if $U_2$ is large and they are far away from being comonotone. The support of $(Z_1,Z_2)$ is given in the left panel of Figure \ref{counterex} and does not correspond of the support of the upper Fr\'echet bound. The lower bound is obtained for something different from $F^+$ and $G^+$. In fact, if one takes for $F$ the lower Fr\'echet bound $F= F^-$ and $G = G^+$ instead, i.e., $X = (4U_1, 1-U_1)$ and  $Y = (U_2, 4U_2),$ then we get more positively correlated $Z_1$ and $Z_2$ as depicted in the right panel of Figure \ref{counterex}. Indeed we get $S(F^-,G^+)\approx2.48<S(F^+,G^+)\approx2.55$.

\begin{figure}[!htbp]
\begin{center}
\includegraphics[width=0.4\textwidth,height=\textwidth, angle=270]{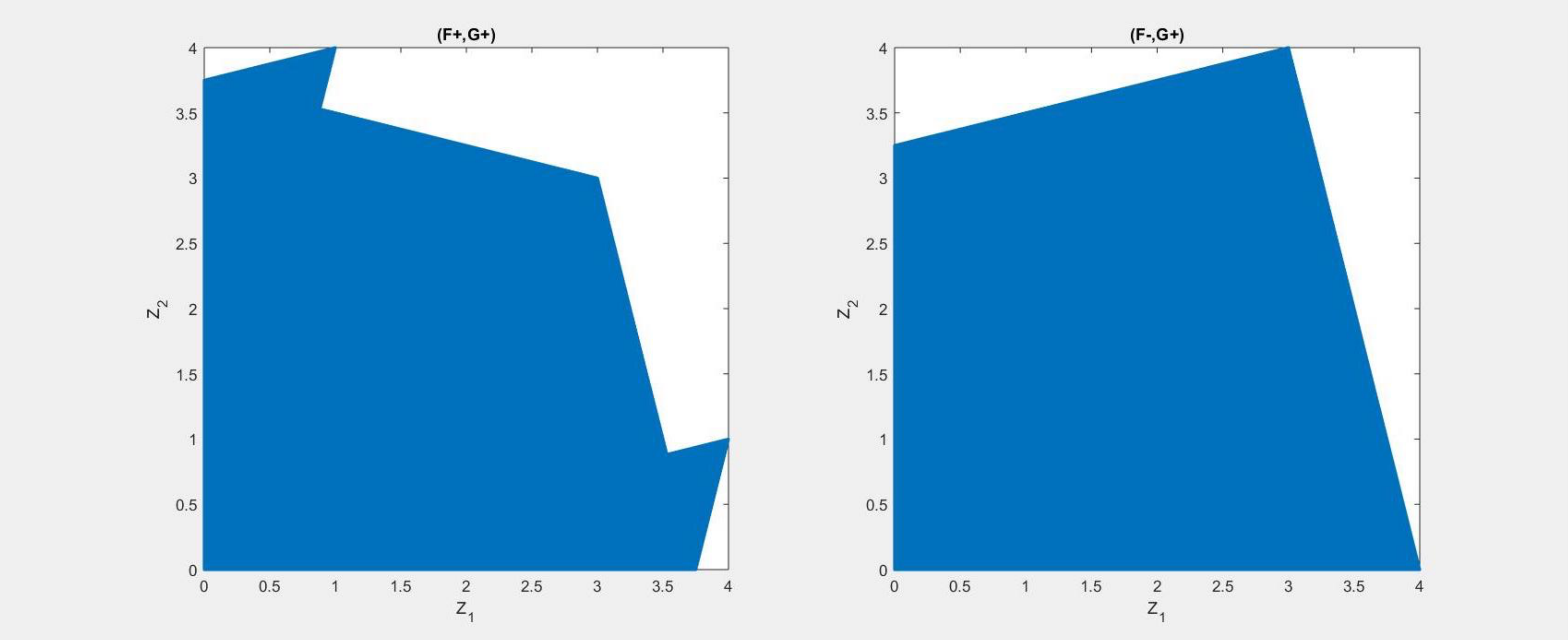}
\end{center}
\caption{Support of $(Z_1,Z_2)$. Left panel: $F=F^+, G=G^+$. Right panel: $F=F^-, G=G^+$.\label{counterex}}
\end{figure}

\subsection{Upper bound on $S_\beta(F,G)$}

It seems to be much more difficult to find a sharp upper bound for $S_\beta(F,G)$, as it is a notoriously difficult problem to find a strongest possible negative dependence in the sense of maximizing the expression
in \eqref{z-representation}. But we can easily derive an upper bound via Jensen's inequality.

\begin{theorem} \label{upper-bound-FG}
For any random vector $\bsX$ and $\bsY$ with cumulative cdfs $F$ and $G$  we get the following upper bound:
$$
S_\beta(F,G) \le \left(  \sum_{i=1}^d M_2(F_i,G_i)\right)^{\frac{\beta}{2}}.
$$
\end{theorem}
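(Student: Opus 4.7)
The plan is to apply Jensen's inequality to the representation \eqref{z-representation}. Since $\beta \in (0,2)$, the exponent $\beta/2$ lies in $(0,1)$, so the function $t \mapsto t^{\beta/2}$ is concave on $[0,\infty)$. That gives the inequality in exactly the right direction: the concave function of a sum can be pulled outside the expectation at the cost of an upper bound.

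Concretely, I would start from
$$S_\beta(F,G) = \E\left(\sum_{i=1}^d Z_i^2\right)^{\beta/2},$$
where $Z_i = |X_i - Y_i|$, apply Jensen's inequality to get
$$S_\beta(F,G) \le \left(\E \sum_{i=1}^d Z_i^2\right)^{\beta/2} = \left(\sum_{i=1}^d \E Z_i^2\right)^{\beta/2},$$
and then identify $\E Z_i^2 = \E|X_i - Y_i|^2 = M_2(F_i,G_i)$ from the definition of $M_2$ in the remark following the generalized energy score. Substituting yields exactly the claimed bound.

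There is no real obstacle here: the bound is purely a consequence of concavity, and notice the right-hand side depends only on the marginals $F_i, G_i$, which is consistent with it being an upper bound valid uniformly over the Fréchet classes $\FF$ and $\mathcal{G}$. The only subtlety worth a sentence of comment is that since the $Z_i^2$ are nonnegative, Jensen applies without integrability issues (the bound is trivially true if any $M_2(F_i,G_i) = \infty$), and that no assumption on the dependence structure is used, so the bound holds for every copula simultaneously — which also suggests that it will typically not be sharp, in contrast to the lower bound of Theorem \ref{lower-bound-FG}.
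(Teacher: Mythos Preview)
Your proposal is correct and follows exactly the paper's own proof: apply Jensen's inequality for the concave map $t \mapsto t^{\beta/2}$ to the representation \eqref{z-representation} and identify $\E Z_i^2 = M_2(F_i,G_i)$. Your additional remarks on integrability and non-sharpness are fine but not needed for the argument.
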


\begin{proof}
Applying Jensen's inequality  to \eqref{z-representation} we get that  for any multivariate distributions $F$ and $G$,
$$
S_\beta(F,G) =  \ \E\left( \sum_{i=1}^d Z_i^2  \right)^{\frac{\beta}{2}} \le \  \left(  \sum_{i=1}^d \E Z_i^2  \right)^{\frac{\beta}{2}} \ = \  \left(  \sum_{i=1}^d M_2(F_i,G_i)\right)^{\frac{\beta}{2}} .
$$
\end{proof}

\subsection{Upper and lower bounds on $S_\beta(F,G)$ for copulas}

In the special case of uniform marginal distributions, the class $\FF$ is simply the class of all copulas and we are able to derive explicit expressions of  the lower and upper bounds obtained in Theorems \ref{lower-bound-FG} and \ref{upper-bound-FG}. Specifically, from Example \ref{ex-uniform}, we have expression \eqref{Ex2Mbeta} and we get that $\E Z_i^2 = 1/6$ . We  can then immediately derive the following consequence.

\begin{corollary}\label{upper-bound-S-copula}
For copulas $C_1,C_2$ we get the following  bounds:
$$
S_\beta(C^+,C^+)  = d^{\frac{\beta}{2}} \cdot \left( \frac{2}{\beta+1} - \frac{2}{\beta+2} \right)\le S_\beta(C_1,C_2) \le  \left(  \frac{d}{6}\right)^{\frac{\beta}{2}}
$$
and in particular for $\beta = 1,$
$$
S(C^+,C^+)  = \frac{1}{3}\sqrt{d} \le S(C_1,C_2) \le  \sqrt{\frac{d}{6}}.
$$
\end{corollary}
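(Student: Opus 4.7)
The plan is to derive this as a direct specialisation of Theorems \ref{lower-bound-FG} and \ref{upper-bound-FG} to the situation in which all univariate marginals are standard uniform, since then the Fréchet class $\FF$ coincides with the set $\CC$ of copulas.

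First I would handle the lower bound. In the copula case all marginals are $U(0,1)$, so in particular the marginals of both $F$ and $G$ are identical ($F_1=\dots=F_d$ and $G_1=\dots=G_d$). This is precisely the situation in which Theorem \ref{lower-bound-FG} yields a sharp bound attained at the upper Fréchet copulas, namely
$$
S_\beta(C_1,C_2) \ge S_\beta(C^+,C^+) = d^{\beta/2} M_\beta(F_1,G_1).
$$
To evaluate the right-hand side I would simply plug in the uniform computation from Example \ref{ex-uniform}: formula \eqref{Ex2Mbeta} gives $M_\beta(F_1,G_1) = \frac{2}{\beta+1} - \frac{2}{\beta+2}$, and substituting yields the stated lower bound.

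For the upper bound I would invoke Theorem \ref{upper-bound-FG}, which gives
$$
S_\beta(C_1,C_2) \le \left( \sum_{i=1}^d M_2(F_i,G_i) \right)^{\beta/2}.
$$
With uniform marginals and $X_i,Y_i$ independent, $M_2(F_i,G_i) = \E[(X_i-Y_i)^2] = \frac{1}{6}$ (this is exactly the $\beta=2$ instance of \eqref{Ex2Mbeta}). Summing over $i=1,\ldots,d$ gives $d/6$ inside the parentheses, yielding $(d/6)^{\beta/2}$.

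The $\beta=1$ case is then just substitution, giving $\frac{1}{3}\sqrt{d}$ for the lower bound (since $\frac{2}{2}-\frac{2}{3} = \frac{1}{3}$) and $\sqrt{d/6}$ for the upper bound. There is essentially no obstacle here, as the entire content of the corollary is the evaluation of the two previously established bounds on the uniform marginals; the only thing to observe is that the identical-marginals hypothesis of Theorem \ref{lower-bound-FG} is automatically satisfied in the copula setting, so the lower bound is actually attained and not merely an inequality.
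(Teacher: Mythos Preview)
Your proposal is correct and follows essentially the same approach as the paper: specialise Theorems \ref{lower-bound-FG} and \ref{upper-bound-FG} to uniform marginals and evaluate the resulting quantities via \eqref{Ex2Mbeta} from Example \ref{ex-uniform}. The paper's own justification is in fact terser than yours, simply noting that $\E Z_i^2 = 1/6$ and invoking the two theorems.
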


\begin{example} \label{sc1c2}
One could conjecture that a sharp upper bound for copulas is obtained by $S_\beta(C^-,C^+)$. We will now give an explicit counterexample for the important case of $d = 2$ and $\beta=1$.
Using the invariance under rotation we can easily derive $S(C^-,C^+)$ in this case from the expected distance of two points on the axes. 
$$
S(C^-,C^+) = \frac{1}{\sqrt{2}} \int_0^1 \int_0^1 \sqrt{x^2+y^2} \ dx dy = \frac{\sqrt{2} + \log(1+\sqrt{2})}{3\sqrt{2}} \approx 0.541.
$$
Now let us consider the copula $C^\|$, defined as the distribution of the following random vector $(U_1,U_2)$ with $U_1 \sim U(0,1)$ and 
\begin{equation}
U_2 :=
\left\{
\begin{array}{ll}
U_1 + \frac{1}{2}, & \mbox{ if } U_1 \le \frac{1}{2},\\[3mm]
U_1 - \frac{1}{2}, & \mbox{ if } U_1 > \frac{1}{2}.
\end{array}
\right.
\label{copu}\end{equation}
Thus the support of the copula $C^\|$ consists of two parallel line segments as displayed in Figure \ref{FigSupp}.
\begin{figure}[!htbp]
\begin{center}
\includegraphics[width=0.7\textwidth,height=0.4\textwidth]{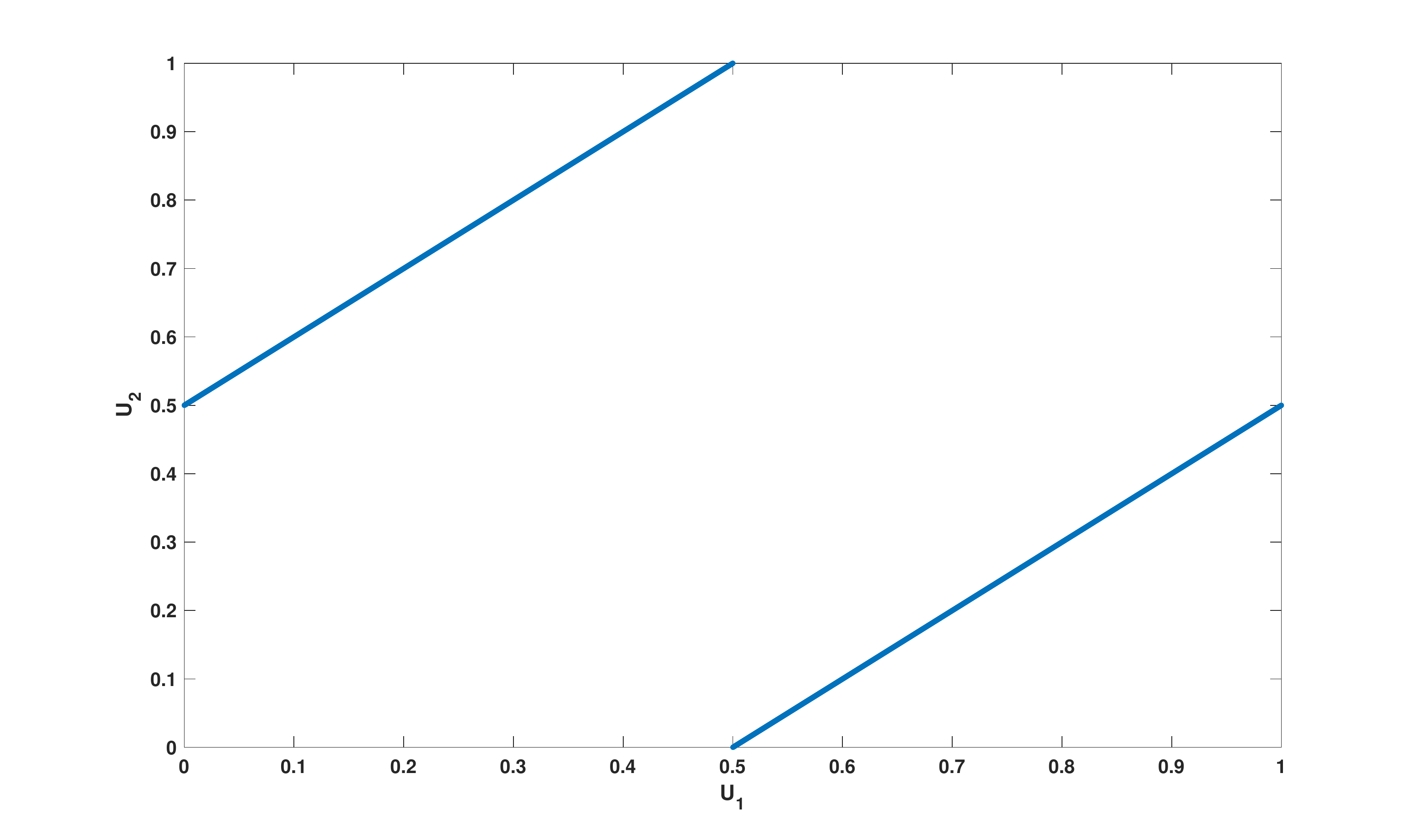}
\end{center}
\caption{Support of the copula $C^\|$ defined in \eqref{copu} \label{FigSupp}}
\end{figure}

Then we get by a similar computation
$$
S(C^+,C^\|) = \frac{1}{\sqrt{2}} \int_0^1 \int_0^1 \sqrt{\frac{1}{2} + \left(\frac{1}{2}+ x-y\right)^2} \ dx dy \approx 0.549.
$$
Thus $S(C^+,C^\|) > S(C^-,C^+)$. 
We notice, however, that this inequality is reversed for the energy distance. We get $\mathcal{E}(C^-,C^+) \approx 0.069 >  0.064 \approx \mathcal{E}(C^-,C^\|)$ even though $S(C^-,C^+) < S(C^+,C^\|)$, as 
$S(C^+,C^+) \approx 0.471$ is significantly smaller than $S(C^\|,C^\|) \approx 0.4985$. Therefore it is still an open problem whether for the energy distance $\mathcal{E}(C^-,C^+)$ maximizes $\mathcal{E}(C_1,C_2)$ among all copulas.
\end{example}

\subsection{Upper bound on $S(F,F)$ for copulas}

Under the assumption of equal copulas $C = C_1 = C_2,$ we can improve the upper bounds in Corollary \ref{upper-bound-S-copula} and even find sharp bounds for $S(C,C)$ in the case
of dimension $d=2$ and $d=3$ for some interesting copulas, which can be called spherical symmetric copulas. These do not seem to be very well-known in the community working on dependence modelling and copulas, but they have been considered from time to time in the statistics literature, e.g., in \cite{eaton1981}, \cite{schwarz1985} and \cite{perlman-wellner-11}.
\cite{perlman-wellner-11} show that in dimensions $d=2$ and $d=3$ there are spherical symmetric random vectors $\bsX$ whose marginals are uniformly distributed on $[-1,1]$. In dimension $d=2,$ this distribution has the density
\begin{equation} \label{circular2}
f(x,y) = \frac{1}{2 \pi \sqrt{1-x^2-y^2}} \bsone_{[x^2+y^2<1]}
\end{equation}
and in dimension $d=3$ this is given by the uniform distribution on the sphere of a unit ball. Notice that the bivariate case can be obtained as the two-dimensional marginals of the three-dimensional case.
Transforming the marginals to uniform distributions on $[0,1]$ via the transformation $X_i \mapsto (X_i+1)/2$ we get copulas called  \textit{spherical symmetric copulas}, which we will denote by
$C^\circ$. In Figure \ref{circ-copula} we show a discrete approximation of the bivariate spherical symmetric copula. Notice that the density is unbounded, going to infinity at the boundary of the support, and therefore in the discrete approximation there are many points there as one expects for a bivariate projection of points uniformly scattered on the sphere of a ball. 

\begin{figure}[!htbp]
\begin{center}
\includegraphics[width=0.6\textwidth,height=0.6\textwidth]{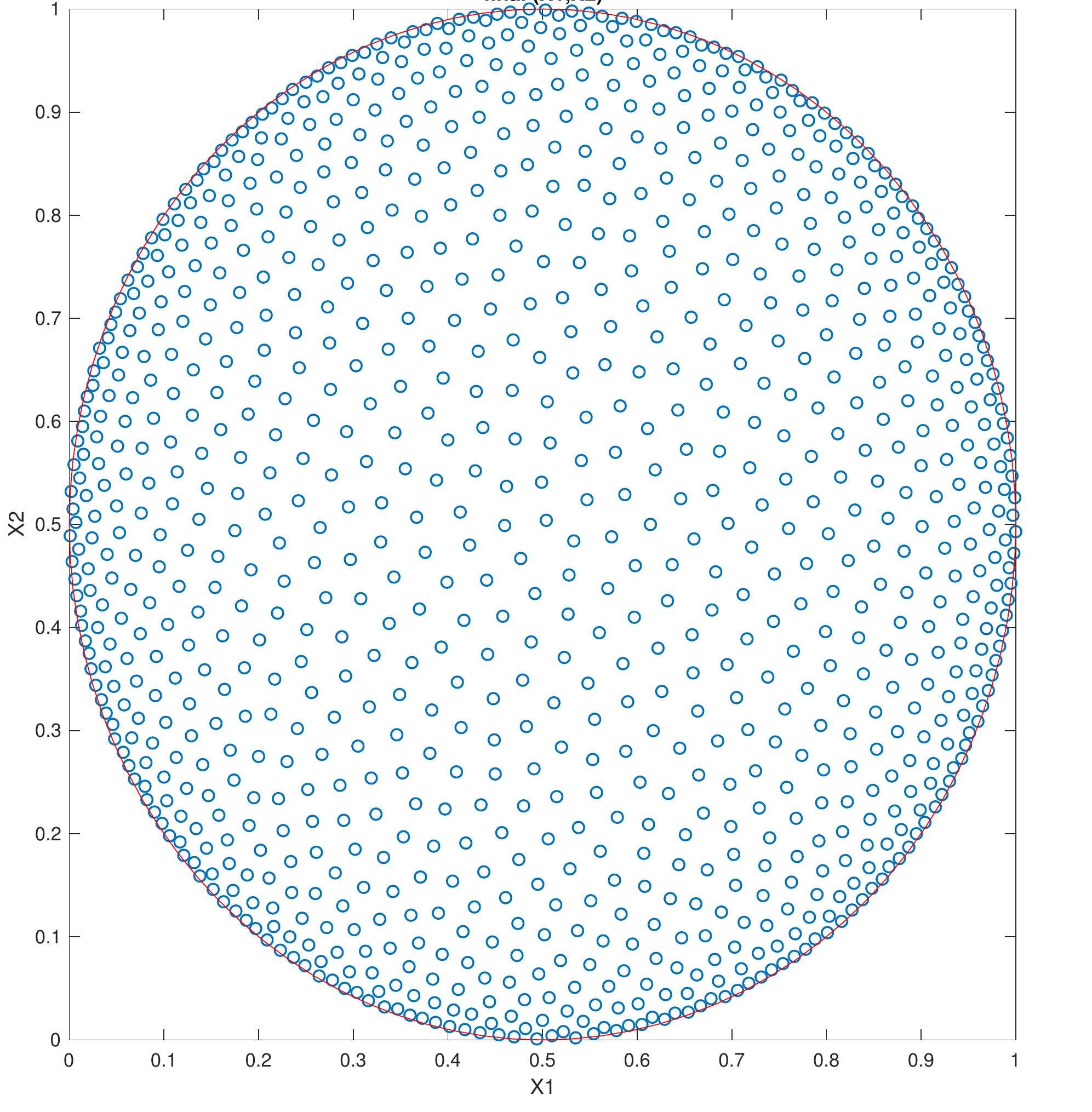}
\end{center}
\caption{An illustration of the bivariate spherical symmetric copula $C^\circ$ \label{circ-copula}}
\end{figure}

Moreover, we will use results that were obtained independently in \cite{mattner1993}, Theorem 2 and as main result in \cite{buja1994}. In our notation their results can be stated as follows.

\begin{theorem} \label{mattner}
The functional $S(F,F)$ is maximal among all random vectors $\bsX \sim F$ with $\E(\|\bsX\|^2) \le 1$ for $S(F^*,F^*)$, where $\bsX^* \sim F^*$ is given as follows.\\
If $d \ge 3$ then $\bsX^*$ is uniformly distributed on the sphere of a unit ball.\\
If $d=2$ then $\sqrt{2/3} \cdot \bsX^*$ has the density given in equation \eqref{circular2}.
\end{theorem}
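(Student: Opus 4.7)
The plan is to recast the objective via the Fourier-analytic representation of the Euclidean norm,
$$\|\bsx\| \;=\; \kappa_d\int_{\R^d}\frac{1-\cos\langle\bst,\bsx\rangle}{\|\bst\|^{d+1}}\, d\bst,$$
valid for an explicit constant $\kappa_d>0$. Applying this with $\bsx=\bsX-\tilde{\bsX}$ and taking expectations over independent copies of $F$ rewrites the functional as a quadratic form in the characteristic function $\varphi_F$:
$$S(F,F) \;=\; \kappa_d\int_{\R^d}\frac{1-|\varphi_F(\bst)|^2}{\|\bst\|^{d+1}}\, d\bst.$$
The problem is thereby converted into an optimization over characteristic functions, with the moment constraint controlling the second-order behaviour of $\varphi_F$ at the origin through $\E\|\bsX-\tilde{\bsX}\|^2=-\Delta|\varphi_F|^2(\bszero)=2\E\|\bsX\|^2-2\|\E\bsX\|^2$.

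Next I would reduce to rotation-invariant $F$. Symmetrizing $\bsX$ by an independent Haar-distributed rotation $R\in O(d)$ preserves both $\E\|\bsX\|^2$ and feasibility, while the concavity of $F\mapsto S(F,F)$ from Lemma \ref{LEM1}, combined with Jensen's inequality, shows that the symmetrized law has at least as large a value of $S(F,F)$. Under this reduction $\varphi_F(\bst)=\phi(\|\bst\|)$ is radial, the Fourier integral collapses to a one-dimensional integral in $r=\|\bst\|$ against $r^{d-1}dr$, and the constraint becomes a moment condition on the radial law.

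The third step is the variational condition for this concave maximization over a convex set. With Lagrange multipliers $\mu$ for the mass constraint and $\lambda$ for the second-moment constraint, the first-order condition at an optimum $F^*$ is
$$\int\|\bsx-\bsy\|\,F^*(d\bsx) \;=\; \tfrac{1}{2}\bigl(\mu+\lambda\|\bsy\|^2\bigr) \quad\text{for }\bsy\in\mathrm{supp}(F^*).$$
For $d\ge 3$, the uniform law on $S^{d-1}$ yields a distance potential of exactly this affine-in-$\|\bsy\|^2$ form, by the classical Newton/Riesz potential identity on the sphere. For $d=2$ the same condition is solved by the density \eqref{circular2}, which is verified by computing the Hankel transform of $(1-r^2)^{-1/2}\mathbf{1}_{[0,1)}$, obtaining a Bessel function, and checking that the corresponding potential is affine in $\|\bsy\|^2$ on the unit disk. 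The concavity from Lemma \ref{LEM1} then upgrades the stationary point to a global maximum. The main obstacle is the explicit $d=2$ verification, which relies on specific Bessel identities; since Theorem \ref{mattner} is stated verbatim from \cite{mattner1993} and \cite{buja1994}, those computations are invoked directly.
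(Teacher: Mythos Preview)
The paper does not give its own proof of Theorem~\ref{mattner}; the result is quoted from \cite{mattner1993} (Theorem~2) and \cite{buja1994} and then used as a black box in the proof of Theorem~\ref{copula-sharp-upper-bound}. Your proposal therefore goes beyond what the paper does. The outline you sketch --- the Fourier representation $\|\bsx\|=\kappa_d\int(1-\cos\langle\bst,\bsx\rangle)\|\bst\|^{-d-1}\,d\bst$ yielding $S(F,F)=\kappa_d\int(1-|\varphi_F(\bst)|^2)\|\bst\|^{-d-1}\,d\bst$, the rotational symmetrization justified by the concavity of Lemma~\ref{LEM1}, and the Lagrangian first-order condition identifying the extremal law --- is indeed the strategy of the cited references, so your plan is faithful to the original sources rather than to anything in the present paper.

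One technical caveat on the variational step: the stationarity condition you write, $\int\|\bsx-\bsy\|\,F^*(d\bsx)=\tfrac12(\mu+\lambda\|\bsy\|^2)$ for $\bsy\in\mathrm{supp}(F^*)$, is necessary but not by itself sufficient. For $d\ge 3$ the support of $F^*$ is the unit sphere, where $\|\bsy\|^2\equiv 1$ and your identity collapses to constancy of the potential on $S^{d-1}$, which is immediate from rotation invariance; the substantive part of the KKT analysis is the \emph{inequality} $\int\|\bsx-\bsy\|\,F^*(d\bsx)\le\tfrac12(\mu+\lambda\|\bsy\|^2)$ for $\|\bsy\|<1$, which is what excludes competitors with mass in the interior of the ball. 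Concavity does upgrade a KKT point to a global maximum, but the full KKT conditions include this off-support inequality, not just the on-support equality. Since you ultimately defer the explicit verifications (including the $d=2$ Bessel computation) to \cite{mattner1993} and \cite{buja1994} anyway, this is a matter of stating the variational principle precisely rather than a genuine gap in the approach.
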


From this result we can derive the following improved bounds for copulas.

\begin{theorem}\label{copula-sharp-upper-bound}
In dimension $d=2$ it holds for any copula $C$ that
$$
S(C,C) \le S(C^\circ, C^\circ) = \frac{\pi}{6}.
$$
In dimension $d=3$ it holds for any copula $C$ that
$$
S(C,C) \le S(C^\circ, C^\circ) = \frac{2}{3}.
$$
\end{theorem}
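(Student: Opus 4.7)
The plan is to reduce to Theorem~\ref{mattner} via a linear change of variables. Given a copula $C$ with $\bsU\sim C$, set $\bsY=2\bsU-\bsone$; then each $Y_i$ is uniform on $[-1,1]$, $\E(\|\bsY\|_2^2)=d/3$, and
$$
S(\bsY,\bsY) \;=\; \E\|2(\bsU-\tilde{\bsU})\|_2 \;=\; 2\,S(C,C).
$$
Applied to $C^\circ$ the same transformation produces a random vector $\bsW$ with uniform marginals on $[-1,1]$ and copula $C^\circ$; by construction $\bsW$ coincides with the spherically symmetric optimiser $\bsX^*$ of Theorem~\ref{mattner} for $d=3$, and with its rescaling $\sqrt{2/3}\,\bsX^*$ for $d=2$.

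For $d=3$, $\E\|\bsY\|_2^2=1$ and Theorem~\ref{mattner} applies directly, giving $S(\bsY,\bsY)\le S(\bsX^*,\bsX^*)=S(\bsW,\bsW)$, where $\bsX^*$ is uniform on $S^2$ and has marginals uniform on $[-1,1]$ by Archimedes' hat-box theorem. For $d=2$, $\E\|\bsY\|_2^2=2/3$, and I would apply Theorem~\ref{mattner} after the rescaling $\bsY\mapsto\sqrt{3/2}\,\bsY$; combined with the homogeneity $S(\lambda F,\lambda F)=\lambda S(F,F)$, the theorem says that the maximiser of $S(F,F)$ under the constraint $\E\|\bsX\|_2^2\le 2/3$ is $\bsW$ itself, so again $S(\bsY,\bsY)\le S(\bsW,\bsW)$. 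Halving gives
$$
S(C,C) \;=\; \tfrac12 S(\bsY,\bsY) \;\le\; \tfrac12 S(\bsW,\bsW) \;=\; S(C^\circ,C^\circ).
$$

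It remains to compute $S(\bsW,\bsW)$ in each dimension. For $d=3$, this is the expected chord length between two independent uniform points on $S^2$; conditioning on one point being the north pole, the angle $\Theta$ between them has density $\tfrac12\sin\theta$ on $[0,\pi]$, the chord length is $2\sin(\Theta/2)$, and an elementary integration gives $S(\bsW,\bsW)=4/3$, whence $S(C^\circ,C^\circ)=2/3$. For $d=2$, direct integration against density \eqref{circular2} leads to a complete elliptic integral that does not appear to simplify; instead I would embed $\bsW$ as the first two coordinates of a point $\bsX^*$ uniform on $S^2\subset\R^3$. Then $\bsT=\bsX^*-\tilde{\bsX}^*$ is rotationally invariant on $\R^3$, so its magnitude $\|\bsT\|_2$ and its direction are independent with the direction uniform on $S^2$. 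Writing that direction as $(\sin\Theta\cos\Phi,\sin\Theta\sin\Phi,\cos\Theta)$, its first two coordinates have Euclidean norm $\sin\Theta$, hence $\|\bsW-\tilde{\bsW}\|_2 = \|\bsT\|_2\sin\Theta$, and independence gives
$$
\E\|\bsW-\tilde{\bsW}\|_2 \;=\; \E\|\bsT\|_2\cdot\E[\sin\Theta] \;=\; \tfrac{4}{3}\cdot\tfrac{\pi}{4} \;=\; \tfrac{\pi}{3},
$$
so $S(C^\circ,C^\circ)=\pi/6$. The main obstacle is this planar evaluation; the decoupling of magnitude and direction of $\bsT$ via its rotational invariance is what replaces an otherwise intractable elliptic integral.
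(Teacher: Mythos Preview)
Your proof is correct, and the reduction to Theorem~\ref{mattner} is essentially the paper's argument: the paper shifts to marginals on $(-1/2,1/2)$ and then rescales by $\sqrt{12/d}$, whereas you map to $[-1,1]$ via $\bsY=2\bsU-\bsone$ and rescale by $\sqrt{3/d}$; these affine normalizations are equivalent and lead to the same inequality $S(C,C)\le S(C^\circ,C^\circ)$.

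The genuine difference is in evaluating the constants. The paper simply quotes the values $M_2=\pi/\sqrt{6}$ and $M_3=4/3$ from \cite{buja1994} and multiplies by $\sqrt{d/12}$. You instead compute $S(\bsW,\bsW)$ directly: the $d=3$ chord-length calculation is standard, but your $d=2$ evaluation is a nice self-contained replacement for the cited result. Realizing $\bsW$ as the first two coordinates of a uniform point on $S^2$, using rotational invariance of $\bsT=\bsX^*-\tilde\bsX^*$ to decouple its magnitude from its (uniform) direction, and then computing $\E\|\bsW-\tilde\bsW\|_2=\E\|\bsT\|_2\cdot\E[\sin\Theta]=\tfrac{4}{3}\cdot\tfrac{\pi}{4}$ avoids both the elliptic integral and the external reference. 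This makes your version more self-contained at essentially no extra cost.
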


\begin{proof}
We define the shift $T(X_1,\ldots,X_d) = (X_1 - 1/2,\ldots,X_d - 1/2)$ that transforms marginals from uniform on $(0,1)$ to uniform on $(-1/2,1/2)$. As obviously 
$$
 \E\left( \| \bsX - \bsY \|_2 \right) =  \E\left( \| T\bsX - T \bsY \|_2 \right)
$$
this shift does not affect the functional $S$, and therefore we can replace the copulas by distributions $F$ with uniform marginals on $(-1/2,1/2)$.
For any such random vector $\bsX \sim F$ we get
$$
\E(\|\bsX\|^2) = \E(X_1^2 + \ldots + X_d^2) = \frac{d}{12} \mbox{ and hence } \E\left(\left\|\ \sqrt{\frac{12}{d}} \bsX\right\|^2\right) = 1.
$$
For any copula $C$, and $\bsX \sim C$ and  $\bsY \sim C$ independent, we thus obtain that $\E\left(\left\|\ \sqrt{\frac{12}{d}} T\bsX\right\|^2\right) = 1$ and thus from Theorem \ref{mattner},
\begin{align} \label{eq-circ}
S(C,C) & =  \E\left( \| \bsX - \bsY \|_2 \right) =  \E\left( \| T\bsX - T \bsY \|_2 \right) \\ \nonumber
& = \sqrt{\frac{d}{12}}  \E\left( \left\| \sqrt{\frac{12}{d}}T\bsX - \sqrt{\frac{12}{d}} T \bsY \right\|_2 \right) \le  \sqrt{\frac{d}{12}}  S(F^*,F^*),
\end{align}
with $S(F^*,F^*)$ as described in Theorem \ref{mattner}. In case $d=2$ and $d=3$ we get equality for $C = C^\circ$ and the corresponding values for 
$S(F^*,F^*)$ are computed in \cite{buja1994}, there denoted as $M_d$. They are given by 
$$
M_2 = \frac{\pi}{\sqrt{6}} \mbox{ and } M_3 = \frac{4}{3}.
$$
Thus we derive for $d=2$ that
$$
S(C,C) \le S(C^\circ, C^\circ) =  \sqrt{\frac{2}{12}} \cdot M_2 =  \frac{\pi}{6}
$$
and for $d= 3$ that
$$
S(C,C) \le S(C^\circ, C^\circ) = \sqrt{\frac{3}{12}}  \cdot M_3 = \frac{2}{3}.
$$
\end{proof}

We also get an improved bound for $d \ge 4$ from Theorem \ref{mattner} but then it is not sharp, as the uniform distribution on the sphere no longer has uniform marginals.
Indeed, \cite{perlman-wellner-11} show that there cannot exist any spherical symmetric copula in dimension $d \ge 4$ and therefore we do not know, how the copula $C$ that maximizes
$C \mapsto S(C,C)$ looks like. We conjecture that it in some sense will be close to spherical symmetry. The improved bound that we can derive from \eqref{eq-circ} and Theorem \ref{mattner} 
in case $d=4$ is
$$
S(C,C) \le \sqrt{\frac{4}{12}} \cdot  M_4 = \sqrt{\frac{1}{3}} \cdot  \frac{64}{15 \pi} \approx 0.784
$$
whereas the bound from Jensen's inequality for $d=4$ given in Corollary \ref{upper-bound-S-copula} is $\sqrt{2/3} \approx 0.816$. For higher dimensions $d$ the difference between the bounds derived from Jensen's inequality in Corollary \ref{upper-bound-S-copula}  and the better ones derived from \eqref{eq-circ} and Theorem \ref{mattner} become smaller and smaller as the latter ones are also approximately $\sqrt{d/6}$
for large $d$.

\begin{remark}
We also notice that $S(C^\circ, C^\circ)$ is not an upper bound for the case that we allow the copulas to be different, as we have an explicit counterexample in Example \ref{sc1c2}
with 
$$
S(C^+,C^\|) = \frac{1}{\sqrt{2}} \int_0^1 \int_0^1 \sqrt{\frac{1}{2} + \left(\frac{1}{2}+ x-y\right)^2} \ dx dy \approx 0.549 >   \frac{\pi}{6} = S(C^\circ, C^\circ).
$$
\end{remark}

\section{Bounds on the Energy Score \label{S3}}

Let $F$ be a distribution and $\bsy$ an observation, we now  first study bounds on $S_\beta(F,\bsy)$ in order to obtain bounds on the energy score $ES_{\beta}(F,\bsy)$.

\subsection{Bounds on $S_\beta(F,\bsy)$\label{coco}}

First, note that we cannot expect a general upper bound for $\bsy \mapsto S_\beta(F,\bsy)$ as $S_\beta(F,\bsy) \to \infty$ for $\bsy  \to \infty$. We thus concentrate on studying the lower bound in what follows.

It is clear that as a function of $\bsy$,  the expression of $S_\beta(F,\bsy)$
is small if $\bsy$ is in some sense near the center of the distribution. In the univariate case it is a well-known simple result that $y \mapsto \E|X-y|$ is 
minimized for $y^* := F_X^{-1}(1/2)$ being the median of the distribution of $X$. 

We can prove a similar lower bound in the multivariate case for the upper Fr\'echet bound $F^+$, if we assume that the marginals are symmetric and unimodal. Recall that a univariate
distribution with cdf $F$ is called symmetric and unimodal with respect to some $\mu \in \R$, if $F(\mu + t) + F(\mu-t) = 1$ for all $t > 0$ and $F$ is convex on $(-\infty,\mu)$ and concave on $(\mu,\infty)$. 
We will need the following simple Lemma for such distributions that we state with a proof here, as we could not find it in the literature.

\begin{lemma} \label{lemma-unimodal}
Assume that the random variable $X$ has a continuous, unimodal and symmetric distribution with respect to $\mu$. Then it holds for all $\mu \le x < y$ and all $\mu \ge x > y$ that
$$
|X-x| \le_{st} |X-y|.
$$
\end{lemma}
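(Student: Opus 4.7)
By translating I may assume $\mu = 0$, so $F(-s) = 1 - F(s)$ for all $s \in \R$ and (by continuity of $F$) $P(X = a) = 0$ for every $a$. Then
$$g(x) := P(|X-x|>t) = 1 - F(x+t) + F(x-t),$$
and the claim $|X-x| \le_{st} |X-y|$ is equivalent to $g(x) \le g(y)$ for every $t \ge 0$. The plan is to reduce both cases of the lemma to a single one-sided monotonicity statement for $g$.

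First, I observe that $g$ inherits the symmetry of $F$: substituting and using $F(-s) = 1 - F(s)$ twice yields $g(-x) = g(x)$. Consequently it suffices to prove that $g$ is non-decreasing on $[0,\infty)$, because symmetry then forces $g$ to be non-increasing on $(-\infty,0]$, which is exactly the second case of the lemma.

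To establish monotonicity on $[0,\infty)$ I exploit the unimodality. Convexity of $F$ on $(-\infty,0]$ and concavity on $[0,\infty)$ produce a density $f$ (existing a.e.) which is symmetric, $f(-s) = f(s)$, and non-increasing on $[0,\infty)$. Then $g'(x) = f(x-t) - f(x+t)$ a.e., and for $x \ge 0$, $t \ge 0$ I compare the two values in two sub-cases. If $x \ge t$, both arguments lie in $[0,\infty)$ with $x-t < x+t$, so $f(x-t) \ge f(x+t)$ directly from monotonicity. If $x < t$, the reflected argument crosses the mode; using $f(x-t) = f(t-x)$ together with $0 \le t-x < t+x$ (the latter because $x \ge 0$) gives $f(t-x) \ge f(x+t)$ again by monotonicity. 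Hence $g' \ge 0$ a.e.\ on $[0,\infty)$ and $g$ is non-decreasing there.

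There is no serious obstacle: the key structural observation is that $g$ inherits symmetry from $F$, which collapses the two cases of the lemma into one. The only place where unimodality is genuinely used is the sub-case $x < t$, where the symmetric reflection $f(x-t) = f(t-x)$ is needed to bring the comparison back to the right half-line.
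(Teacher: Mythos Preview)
Your proof is correct and follows essentially the same approach as the paper: both compute the derivative of the distribution function of $|X-x|$ with respect to $x$ and use that the symmetric unimodal density satisfies $f(a)\ge f(b)$ whenever $|a-\mu|\le|b-\mu|$. The only cosmetic differences are that you work with the survival function $g$ and translate to $\mu=0$, and that you split the sign check into the sub-cases $x\ge t$ and $x<t$, whereas the paper handles both at once via the inequality $|x+t-\mu|\ge|x-t-\mu|$.
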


\begin{proof}
Denote by $F_x$ the cdf of $|X-x|$, and assume  $\mu \le x < y$. We have to show that $F_x(t) \ge F_y(t)$ for all $t \ge 0$. A simple calculation shows
$F_x(t) = F(x+t) - F(x-t)$. As $F$ is continuous, unimodal and symmetric, it has a density $f$ which is symmetric around $\mu$ and decreasing on $[\mu,\infty)$. Therefore
$$
\frac{\partial}{\partial x} F_x(t) = f(x+t) - f(x-t) \le 0
$$
as $|x+t-\mu| > |x-t-\mu |$. This implies the assertion for $\mu \le x < y$ and the case $\mu \ge x > y$ follows then by symmetry.
\end{proof}

\begin{theorem} \label{bound-unimodal}
Assume that the random vector $\bsX$ has a cdf $F$ with marginals $F_i$ that are continuous, unimodal and symmetric with respect to $\mu_i$, $i = 1,\ldots,d$. 
Then we have for all $\bsy \in \R^d$
$$
S_\beta(F,\bsy) \ge S_\beta(F^+,\bsmu).
$$
\end{theorem}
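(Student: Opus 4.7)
The plan is to obtain the bound as a chain: apply Theorem \ref{lower-bound-FG} with the second distribution set to the point mass at $\bsy$, then use Lemma \ref{lemma-unimodal} to pass from $y_i$ to $\mu_i$ coordinate-wise, and finally identify the resulting comonotone expression as $S_\beta(F^+,\bsmu)$ itself. Write $H_i$ for the cdf of $|X_i - y_i|$ and $H_i^*$ for the cdf of $|X_i - \mu_i|$.

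First I would apply Theorem \ref{lower-bound-FG} with $G = \delta_{\bsy}$; since its marginals are the Dirac masses $\delta_{y_i}$, the cdf $F_i \Diamond \delta_{y_i}$ coincides with $H_i$, and the theorem gives, for a single uniform $U$,
$$
S_\beta(F,\bsy) \ge \E\left(\sum_{i=1}^d H_i^{-1}(U)^{2}\right)^{\beta/2}.
$$
Next I would invoke Lemma \ref{lemma-unimodal} with $x=\mu_i$ and $y=y_i$: continuity, symmetry and unimodality of $F_i$ around $\mu_i$ yield $|X_i - \mu_i| \le_{st} |X_i - y_i|$, i.e.\ $H_i^{*-1}(u) \le H_i^{-1}(u)$ for every $u \in [0,1]$. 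Summing the squares and raising to the positive power $\beta/2$ preserves the inequality pointwise in $U$, which gives
$$
S_\beta(F,\bsy) \ge \E\left(\sum_{i=1}^d H_i^{*-1}(U)^{2}\right)^{\beta/2}.
$$

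The remaining task, which I expect to be the main obstacle, is to identify this last expectation with $S_\beta(F^+,\bsmu)$. Under $F^+$ one has $X_i^+ = F_i^{-1}(V)$ for a common uniform $V$, and the continuity-plus-symmetry identity $F_i^{-1}(1-v) - \mu_i = -(F_i^{-1}(v)-\mu_i)$ together with monotonicity of $F_i^{-1}$ makes $|X_i^+ - \mu_i|$ a nondecreasing function of $|2V-1|$. Since $|2V-1|$ is uniform on $[0,1]$ and the same random variable for every coordinate, the vector $(|X_1^+-\mu_1|, \ldots, |X_d^+-\mu_d|)$ is a comonotone coupling of the marginals $H_1^*,\ldots,H_d^*$, and comonotone couplings of given marginals are unique in distribution, so this vector has the same law as $(H_1^{*-1}(U),\ldots,H_d^{*-1}(U))$. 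Hence the right-hand side of the last display equals $\E\|\bsX^+ - \bsmu\|_2^{\beta} = S_\beta(F^+,\bsmu)$, closing the argument. The comonotonicity identification is the delicate point: it is where the symmetric structure of the marginals really bites, by forcing $|X_i^+ - \mu_i|$ to factor through the common $|2V-1|$; the earlier two steps are a routine chaining of Theorem \ref{lower-bound-FG} and Lemma \ref{lemma-unimodal}.
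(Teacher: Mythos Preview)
Your proof is correct and follows essentially the same route as the paper: first pass to the comonotone coupling of the $|X_i-y_i|$ via the submodularity argument of Theorem~\ref{lower-bound-FG}, then shift $y_i\to\mu_i$ using Lemma~\ref{lemma-unimodal}, and finally identify the comonotone expression at $\bsmu$ with $S_\beta(F^+,\bsmu)$. Your explicit verification that $(|X_1^+-\mu_1|,\ldots,|X_d^+-\mu_d|)$ is comonotone under $F^+$ via the common variable $|2V-1|$ is in fact more detailed than the paper, which simply asserts this fact, and your pointwise quantile argument for the second inequality is a slightly more elementary alternative to the paper's appeal to a general multivariate stochastic ordering result.
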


\begin{proof}
We have 
\begin{align} \label{z-representation-y1}
S_\beta( F,\bsy) & =    \ \E\left( \sum_{i=1}^d Z_{i,y_i}^2  \right)^\frac{\beta}{2},
 \end{align}
 where $Z_{i,y_i}^2 = (X_i-y_i)^2$. It follows from Lemma \ref{lemma-unimodal} that
  $$
(X_i-y_i)^2 \ge_{st} (X_i-\mu_i)^2
 $$
 for all $i$. Let us denote by $(Z_{i,y_i}^+)^2, i=1,\ldots,d$ comonotone random variables with the same distributions as $Z_{i,y_i}^2$. 
Notice that for general $\bsy$ the vector $(Z_{1,y_1}^2, \ldots, Z_{d,y_d}^2)$ is typically not comonotone, even if $F$ is the comonotonic upper Fr\'echet bound $F=F^+$. This is the case, however, if $y_i = \mu_i$ is the median for all $i = 1,\ldots,d$. Therefore we get
 \begin{align} \label{z-representation-y2}
 S_\beta( F,\bsy) & =    \ \E\left( \sum_{i=1}^d Z_{i,y_i}^2  \right)^\frac{\beta}{2} \ \ge    \ \E\left( \sum_{i=1}^d  (Z_{i,y_i}^+)^2 \right)^\frac{\beta}{2}\\
 & \ge  \  \E\left( \sum_{i=1}^d  (Z_{i,\mu_i}^+)^2 \right)^\frac{\beta}{2} = S_{\beta}(F^+,\bsmu).
 \end{align}
The first inequality follows as in the proof of Theorem \ref{lower-bound-FG}.
The second inequality follows from the fact that for random vectors $\bsZ$ and $\bsZ'$ with the same copula $C^+$ stochastic ordering $Z_h \le_{st} Z_h'$ of the
marginals implies multivariate stochastic ordering and thus $\E f(\bsZ) \le \E f(\bsZ')$ for all increasing functions $f:\R^d \to \R$, see e.g., \cite{mueller2002}, Theorem 3.3.8. 
\end{proof}


For copulas we get the following corollary. We denote here by $\bsone = (1,\ldots,1)$ a vector with all components being equal to one.

\begin{corollary} \label{bound-scy}
For all copulas $C$ and all $\bsy \in [0,1]^d$ it holds
$$
S_\beta(C,\bsy) \ge S_\beta\left(C^+,\frac{1}{2}\bsone\right) = d^{\frac{\beta}{2}} \frac{\frac{1}{2}^\beta}{\beta +1}.
$$
\end{corollary}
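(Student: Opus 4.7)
The plan is to apply Theorem \ref{bound-unimodal} directly, since for any copula $C$ the marginals of $\bsX$ are uniform on $(0,1)$, which are continuous, symmetric and unimodal with respect to $\mu_i = 1/2$. First I would verify these hypotheses: the uniform cdf $F_i(x) = x$ on $(0,1)$ is continuous; it satisfies $F_i(1/2+t) + F_i(1/2-t) = 1$ for every $t > 0$; and being piecewise linear, it is convex on $(-\infty, 1/2)$ and concave on $(1/2, \infty)$. Hence Theorem \ref{bound-unimodal} applies with $\bsmu = \tfrac{1}{2}\bsone$ and yields
$$ S_\beta(C, \bsy) \ge S_\beta\bigl(C^+, \tfrac{1}{2}\bsone\bigr) $$
for every $\bsy \in [0,1]^d$.

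The remaining task is to evaluate the right-hand side in closed form. Under $C^+$ the random vector $\bsX$ has the representation $\bsX = (U,\ldots,U)$ with $U \sim U(0,1)$, so
$$ \bigl\| \bsX - \tfrac{1}{2}\bsone \bigr\|_2^\beta = \Bigl( d\,(U-1/2)^2 \Bigr)^{\beta/2} = d^{\beta/2} |U - 1/2|^\beta. $$
Taking expectations and using that $V = U - 1/2$ is uniform on $(-1/2, 1/2)$, I would compute
$$ S_\beta\bigl(C^+, \tfrac{1}{2}\bsone\bigr) = d^{\beta/2} \, \E|V|^\beta = d^{\beta/2} \cdot 2 \int_0^{1/2} v^\beta\, dv = d^{\beta/2} \cdot \frac{(1/2)^\beta}{\beta+1}, $$
which matches the stated value.

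There is no real obstacle here: the corollary is a one-line specialization of Theorem \ref{bound-unimodal} combined with an elementary moment computation under comonotonicity. The only point worth double-checking is that the uniform distribution genuinely satisfies the paper's notion of \emph{symmetric and unimodal}, since its cdf is piecewise linear and thus sits at the boundary of the convexity/concavity conditions; but both requirements hold with equality in the relevant intervals.
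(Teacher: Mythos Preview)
Your proposal is correct and follows exactly the paper's own approach: apply Theorem~\ref{bound-unimodal} to the uniform marginals (which are symmetric and unimodal about $1/2$) and then compute $\E|U-1/2|^\beta = (1/2)^\beta/(\beta+1)$ under the comonotone representation $\bsX=(U,\ldots,U)$. Your additional verification that the uniform cdf meets the convexity/concavity requirements is a nice touch but not strictly needed beyond what the paper asserts.
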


\begin{proof}
This follows from Theorem \ref{bound-unimodal},
as uniform distributions $U$ on $(0,1)$ are symmetric and unimodal with respect to $1/2$ and 
$$
\E\left(|U-1/2|^\beta\right) = \frac{\frac{1}{2}^\beta}{\beta +1}.
$$
\end{proof}

In the case of copulas we can also have a look at the case that the observation $\bsy$ is an extreme point, which can be assumed to be without loss of generality
$\bsy = \bszero$.

\begin{theorem}  \label{lower-bound-0}
For $\bsy = \bszero$ the function
$$
C \mapsto S_\beta(C,\bszero) 
$$
attains its minimum for the upper Fr\'echet bound  $C^+$. 
\end{theorem}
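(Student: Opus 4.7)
The plan is to recognize this as essentially the special case of Theorem \ref{lower-bound-FG} in which the second distribution degenerates to a point mass. Identifying $\bsy = \bszero$ with the one-point measure $\delta_{\bszero}$, the marginals of $C$ are all uniform on $[0,1]$ (since $C$ is a copula) and the marginals of $\delta_{\bszero}$ are all $\delta_0$, so the identical-marginals hypothesis of Theorem \ref{lower-bound-FG} is satisfied and the conclusion is immediate. But I would also present the short direct argument, which exposes what is going on.

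Concretely, I would first rewrite the quantity in the form
$$
S_\beta(C,\bszero) \ = \ \E \left(\sum_{i=1}^d X_i^2\right)^{\beta/2},
$$
with $\bsX \sim C$ and each $X_i \sim U(0,1)$. The map $f(z_1,\ldots,z_d) = (\sum_i z_i)^{\beta/2}$ is concave for $\beta \in (0,2)$, hence submodular on $[0,\infty)^d$, exactly as in the representation \eqref{z-representation} used for the proof of Theorem \ref{lower-bound-FG}. The marginals of $(X_1^2,\ldots,X_d^2)$ are all fixed (they are the common push-forward of $U(0,1)$ under squaring), so by the classical result of \cite{tchen1980}, the expectation $\E f(X_1^2,\ldots,X_d^2)$ over all couplings with these marginals is minimized when $(X_1^2,\ldots,X_d^2)$ is comonotone.

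The final step is to note that since $x \mapsto x^2$ is strictly increasing on $[0,1]$, comonotonicity of $(X_1^2,\ldots,X_d^2)$ is equivalent to comonotonicity of $(X_1,\ldots,X_d)$, i.e., to $C = C^+$. This yields $S_\beta(C,\bszero) \ge S_\beta(C^+,\bszero)$, with the minimum value
$$
S_\beta(C^+,\bszero) = \E\bigl((dU^2)^{\beta/2}\bigr) = \frac{d^{\beta/2}}{\beta+1}
$$
by taking $X_1=\cdots=X_d=U \sim U(0,1)$.

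I do not expect any real obstacle, since every ingredient (submodularity of a concave function of a sum, the Tchen lower bound, and preservation of comonotonicity under increasing transformations) is already used in the proof of Theorem \ref{lower-bound-FG}. The only subtlety is the trivial observation that a point mass admits only a degenerate (and thus "comonotone") coupling, which is what allows the identical-marginals hypothesis of that theorem to apply verbatim here.
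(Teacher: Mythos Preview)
Your proposal is correct and follows essentially the same route as the paper: write $S_\beta(C,\bszero)=\E f(X_1^2,\ldots,X_d^2)$ with $f$ submodular, note that $(X_1^2,\ldots,X_d^2)$ has the same copula as $(X_1,\ldots,X_d)$ because $x\mapsto x^2$ is increasing on $[0,1]$, and apply Tchen's bound. Your additional framing as the degenerate special case of Theorem~\ref{lower-bound-FG} and the explicit evaluation $S_\beta(C^+,\bszero)=d^{\beta/2}/(\beta+1)$ are correct extras that the paper does not spell out.
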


\begin{proof}
The proof for the minimum of $C \mapsto S_\beta(C,\bszero)$ follows the same lines as in Theorem \ref{lower-bound-FG}. If $\bsU$ is a random vector with copula $C$, then
$$
S_\beta(C,\bszero) = \E f(Z_1,\ldots,Z_d)
$$ 
where $Z_i = U_i^2, \ i=1,\ldots, d,$ and $f$ is a submodular function. As  $\bsZ = (Z_1,\ldots,Z_d)$ has the same copula as $\bsU$, we can conclude that $S_\beta(C,\bszero) \ge S_\beta(C^+,\bszero)$.
\end{proof}

It is easy to see that it is not true in general that the upper Fr\'echet bound  $C^+$ minimizes 
$C \mapsto S_\beta(C,\bsy)$. Due to invariance under rotations the minimum is obtained for the lower Fr\'echet bound  $C^-$ if $\bsy = (0,1)$.

\subsection{Bounds on $ES_\beta(F,\bsy)$}

Similarly to the above study of $S_\beta(F,\bsy)$, one cannot expect a general upper bound of the energy score 
$$ES_\beta(F,\bsy)=S_\beta(F,\bsy)-\frac{1}{2}S_\beta(F,F)$$
as the quantity tends to $+\infty$  for $\bsy  \to \infty$. As we have to deal with a difference of two quantities, it is also in general more difficult to find sharp bounds, whereas one easily gets
some bounds by bounding each of the two quantities using our previous results. 

We now first consider a bounded domain for $\bsy$. We then characterize the copula that achieves the lower bound.

As $\bsy \mapsto \|\bsx-\bsy \|_2^\beta$ is convex for $\beta \ge 1$, we also get that $\bsy \mapsto S_\beta(F,\bsy)$ is convex in this case. From Lemma \ref{LEM1}, we thus can easily derive the following result.

\begin{lemma} \label{lemma-convex}
For $\beta \in [1,2)$ the functions $\bsy \mapsto ES_\beta(F,\bsy)$ and $F \mapsto ES_\beta(F,\bsy)$ are convex.
\end{lemma}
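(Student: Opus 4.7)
The statement is essentially a one-line consequence of the two ingredients already assembled just before it, namely the convexity of $\bsy \mapsto \|\bsx-\bsy\|_2^\beta$ for $\beta\ge 1$ and Lemma \ref{LEM1}. So I would write a very short proof that cleanly separates the two assertions.

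For the first assertion, my plan is to observe that for every fixed $\bsx\in\R^d$, the map $\bsy\mapsto \|\bsx-\bsy\|_2$ is convex (it is a translate of a norm), and $t\mapsto t^\beta$ is nondecreasing and convex on $[0,\infty)$ for $\beta\ge 1$; composing gives convexity of $\bsy\mapsto \|\bsx-\bsy\|_2^\beta$. Integrating this family of convex functions against the probability measure $F$ preserves convexity, so $\bsy\mapsto S_\beta(F,\bsy)=\int \|\bsx-\bsy\|_2^\beta\,F(d\bsx)$ is convex. Since $-\tfrac12 S_\beta(F,F)$ does not depend on $\bsy$, subtracting it leaves $\bsy\mapsto ES_\beta(F,\bsy)$ convex.

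For the second assertion, I would use that $F\mapsto S_\beta(F,\bsy)=\int\|\bsx-\bsy\|_2^\beta\,F(d\bsx)$ is an affine (in fact linear) functional of the measure $F$. By Lemma \ref{LEM1}, $F\mapsto S_\beta(F,F)$ is concave, so $F\mapsto -\tfrac12 S_\beta(F,F)$ is convex. Adding an affine functional to a convex functional yields a convex functional, giving the claimed convexity of $F\mapsto ES_\beta(F,\bsy)$.

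There is essentially no obstacle here; the only thing one has to be a bit careful about is confirming that Lemma \ref{LEM1} applies for the full range $\beta\in[1,2)$ (it was stated for general $\beta\in(0,2)$ as a corollary of the proper-scoring-rule inequality \eqref{es-sm}, which does hold throughout that range), and noting that the convexity of $t\mapsto t^\beta$ on $[0,\infty)$ fails for $\beta<1$, which is precisely why one needs to restrict to $\beta\in[1,2)$ for the $\bsy$-convexity part.
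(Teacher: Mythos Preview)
Your proof is correct and follows exactly the same approach as the paper: convexity in $\bsy$ via the convexity of $\bsy\mapsto\|\bsx-\bsy\|_2^\beta$ for $\beta\ge 1$ integrated against $F$, and convexity in $F$ via linearity of $F\mapsto S_\beta(F,\bsy)$ combined with the concavity of $F\mapsto S_\beta(F,F)$ from Lemma~\ref{LEM1}.
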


Considering a copula $C$ and an observation $\bsy \in [0,1]^d$ we immediately get the following consequence.

\begin{proposition}
For $\beta \in [1,2)$ the function
$$
\bsy \mapsto ES_\beta(C,\bsy), \quad \bsy \in [0,1]^d,
$$
attains a maximum in the set $\{0,1\}^d$. 
\end{proposition}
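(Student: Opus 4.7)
The plan is to recognize this as a direct application of the Bauer maximum principle: a convex function on a compact convex set attains its maximum at an extreme point of that set. The domain $[0,1]^d$ is a compact convex polytope whose set of extreme points is exactly the vertex set $\{0,1\}^d$, so once we know the function is convex and attains its maximum, the conclusion follows immediately.

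First, I would invoke Lemma \ref{lemma-convex} to get that $\bsy \mapsto ES_\beta(C,\bsy)$ is convex on $\R^d$ for $\beta \in [1,2)$. Next, I would note that this function is continuous (finite-valued convex functions on $\R^d$ are automatically continuous), and $[0,1]^d$ is compact, so a maximum exists. To close the argument I would give a short self-contained justification of the Bauer principle specialized to this situation: writing any $\bsy \in [0,1]^d$ as a convex combination $\bsy = \sum_{\bsv \in \{0,1\}^d} \lambda_{\bsv} \bsv$ with $\lambda_{\bsv} \ge 0$ and $\sum \lambda_{\bsv}=1$, convexity yields
\[
ES_\beta(C,\bsy) \le \sum_{\bsv \in \{0,1\}^d} \lambda_{\bsv} ES_\beta(C,\bsv) \le \max_{\bsv \in \{0,1\}^d} ES_\beta(C,\bsv),
\]
which proves the claim.

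There is no real obstacle here; the one thing to double-check is that the convex combination representation is available for every point of $[0,1]^d$, which is immediate since $[0,1]^d$ is the convex hull of $\{0,1\}^d$ (e.g.\ by induction on $d$, or by writing each coordinate $y_i = (1-y_i)\cdot 0 + y_i\cdot 1$ and taking the product measure over coordinates). This reduces the whole proof to two or three lines.
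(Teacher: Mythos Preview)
Your proposal is correct and follows exactly the same line as the paper's proof: both invoke the convexity of $\bsy \mapsto ES_\beta(C,\bsy)$ from Lemma~\ref{lemma-convex} and then apply the Bauer maximum principle on the cube $[0,1]^d$, whose extreme points are $\{0,1\}^d$. Your version simply spells out the convex-combination argument that the paper leaves implicit.
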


\begin{proof}
This immediately follows from the fact that a convex function on a compact convex domain attains a maximum in an extreme point.
\end{proof}

Following the results on the lower bound of $S_\beta(F,\bsy)$ obtained for copulas in Section \ref{coco}, it is natural to conjecture that the upper Fr\'echet bound  $C^+$ is also the minimum of $C \mapsto ES_\beta(C,\bszero)$. This is not true, however. We can show that in the bivariate case $ES(C^+,\bszero) > ES(\hat{C},\bszero)$
for the copula $\hat{C}$ of the following random vector $\hat{\bsU}$. Let $U,U' \sim U(0,1)$ be independent uniformly distributed random variables and define $\hat{\bsU} = (\hat{U}_1,\hat{U}_2)$
as follows: $\hat{U}_1= \hat{U}_2 = U$, if $U \le 1/2$. If $U > 1/2$ then $\hat{U}_1=U$ and $\hat{U}_2=(U'+1)/2$.

We are not able to obtain an explicit lower bound but we can characterize some properties of the copula that achieves the minimum energy score.

Let $T: \R^d \to \R^d$ be a transformation that is an isometry and that preserves the marginal distributions in the following sense: if $\bsX$ is a random vector
with distribution function $F$ and marginals $F_1,\ldots,F_d$, then $T(\bsX)$ also has the same marginals and for any $\bsx,\bsy \in \R^d$ we have
$$
 \|T(\bsx)-T(\bsy) \|_2 =  \|\bsx-\bsy \|_2.
$$
It is easy to see that in the case of uniform marginals, i.e., for copulas, this holds for reflections of the form
$$
T_i(\bsx) = (x_1,\ldots,x_{i-1}, 1-x_i, x_{i+1},\ldots,x_d)
$$
and for permutations $\pi$ of the coordinates
$$
T_{\pi}(\bsx) = (x_{\pi_1},\ldots,x_{\pi_d}).
$$
Let us denote by $C_T$ the copula of the transformation $T(\bsU)$, if $\bsU$ is a random vector with copula $C$, and let $H$ be the finite group generated by all these isometric transformations of the hypercube that preserve the copula property. We get the following theorem.

\begin{theorem} \label{lower-bound-1/2}
For $\beta \in [1,2)$ and $\bsy = \frac{1}{2}\bsone$ the function 
$$
C \mapsto ES_\beta(C,\bsy)
$$
attains a minimum for a copula that is invariant under $H$. 
\end{theorem}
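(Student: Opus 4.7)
The strategy is a Reynolds-style symmetrization: for any copula $C$, average it over the group $H$ to produce an $H$-invariant copula $\bar{C}$ with a no larger energy score. This succeeds because $\bsy = \frac{1}{2}\bsone$ is a fixed point of every $T \in H$, and because $F \mapsto ES_\beta(F, \bsy)$ is convex for $\beta \in [1,2)$ by Lemma \ref{lemma-convex}.

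First, observe that $T(\frac{1}{2}\bsone) = \frac{1}{2}\bsone$ for every $T \in H$: each reflection $T_i$ sends $\frac{1}{2}$ to $1 - \frac{1}{2} = \frac{1}{2}$, every coordinate permutation fixes the constant vector $\frac{1}{2}\bsone$, and these generate $H$. Combined with the isometry property of every $T \in H$, for independent $\bsU, \bsU' \sim C$ this gives $\|T\bsU - \bsy\|_2 = \|T\bsU - T\bsy\|_2 = \|\bsU - \bsy\|_2$ and $\|T\bsU - T\bsU'\|_2 = \|\bsU - \bsU'\|_2$. Raising to the $\beta$-th power and integrating yields $S_\beta(C_T, \bsy) = S_\beta(C, \bsy)$ and $S_\beta(C_T, C_T) = S_\beta(C, C)$, hence $ES_\beta(C_T, \bsy) = ES_\beta(C, \bsy)$ for every $T \in H$.

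Next, define the Reynolds average
$$ \bar{C} = \frac{1}{|H|} \sum_{T \in H} C_T. $$
Since each $T \in H$ preserves uniform marginals by the very definition of $H$, every $C_T$ is a copula and the convex combination $\bar{C}$ is again a copula. It is also $H$-invariant: representing a draw from $\bar{C}$ as $T_0 \bsU$ with $T_0$ uniform on $H$ independent of $\bsU \sim C$, the composition $S T_0$ is again uniform on $H$ for any $S \in H$, so $(\bar{C})_S = \bar{C}$. The convexity of $F \mapsto ES_\beta(F, \bsy)$ from Lemma \ref{lemma-convex} together with the invariance just established then gives
$$ ES_\beta(\bar{C}, \bsy) \le \frac{1}{|H|} \sum_{T \in H} ES_\beta(C_T, \bsy) = ES_\beta(C, \bsy). $$

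Finally, the set of copulas on $[0,1]^d$ is weakly compact and $C \mapsto ES_\beta(C, \bsy)$ is continuous under weak convergence (the integrands $\|\bsu - \bsy\|_2^\beta$ and $\|\bsu - \bsv\|_2^\beta$ are bounded continuous on the compact set $[0,1]^d$), so a minimizer exists, and by the averaging step at least one minimizer is $H$-invariant. No individual step is really hard; the content of the statement is the recognition that the symmetrization trick applies precisely because $\bsy = \frac{1}{2}\bsone$ is a fixed point of the full symmetry group of the hypercube.
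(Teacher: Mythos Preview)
Your proof is correct and follows essentially the same symmetrization argument as the paper: average $C$ over $H$ to obtain an $H$-invariant copula, use that $\bsy=\tfrac{1}{2}\bsone$ is fixed by every $T\in H$ so that $ES_\beta(C_T,\bsy)=ES_\beta(C,\bsy)$, and then apply the convexity of $C\mapsto ES_\beta(C,\bsy)$ from Lemma~\ref{lemma-convex}. Your version is in fact more complete than the paper's, since you explicitly verify the invariance of $\bar C$ and add the compactness/continuity argument guaranteeing that a minimizer actually exists.
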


\begin{proof}
Let us define for a fixed $C$ and $\bsy = \frac{1}{2}\bsone$
$$
\hat{\bsy} := \frac{1}{ |H |} \sum_{T \in H} T(\bsy)
$$
and
$$
\hat{C} := \frac{1}{ |H |} \sum_{T \in H} C_T.
$$
Then $\hat{\bsy} = \bsy$, $T(\hat{\bsy}) = \hat{\bsy}$ for all $T \in H$ and $\hat{C} $ is invariant under $H$. Due to convexity of $C \mapsto ES_\beta(C,\bsy)$ we get
\begin{align*}
ES_\beta(\hat{C} ,\hat{\bsy})  & \le \frac{1}{ |H |} \sum_{T \in H} ES_\beta(C_T, \hat{\bsy}) = \frac{1}{ |H |} \sum_{T \in H} ES_\beta(C_T, T(\hat{\bsy})) \\
& = ES_\beta(C, \hat{\bsy}) .
\end{align*}
\end{proof}

With a similar argument we can show that the function $C \mapsto ES_\beta(C,\bszero)$ attains a minimum for a copula that is invariant under permutations, but we are not able to derive an explicit solution for the moment.

\section{Conclusions \label{S4}}

We investigated dependence uncertainty bounds on the energy score and for related functionals. The obtained results indicate that indeed these functionals seem not to be very sensitive to the dependence structure
as one can see e.g., from the inequality
$$
\frac{1}{3}\sqrt{d} \le S(C_1,C_2) \le  \sqrt{\frac{d}{6}}
$$
in Corollary \ref{upper-bound-S-copula}, which holds for all copulas $C_1,C_2$. Notice that we get the even closer sharp bounds
$$
\frac{\sqrt{2} }{3}\le S(C,C) \le \frac{\pi}{6}
$$
in Theorem \ref{copula-sharp-upper-bound} for the case $d=2$ if we restrict to the case of equal copulas. Therefore our results support the corresponding claim in \cite{pinson2013} which was based on a simulation study using multivariate normal distributions. However, many questions remain open and we hope to stimulate research on this topic that we consider as important. For example, we are not able to find explicitly the copula that achieves the lower bound of the energy score. We are only able to provide a partial characterization of it in Theorem \ref{lower-bound-1/2}. We are also working on the problem of finding the numerical solution of this optimization
problem by using a variant of the swapping algorithm that was used in \cite{puccetti2017} for a related problem. First results indicate that the solution seems to be a copula with a very unusual shape, but these results will be reported in a forthcoming paper.

\bibliographystyle{artbibst}
\bibliography{bounds}

\end{document}